
\documentclass[11pt, a4paper]{article}
\usepackage{amsfonts}
\usepackage{}
\usepackage{amsthm}
\usepackage{amsmath,amssymb,latexsym,color}
\usepackage[mathscr]{eucal}
\usepackage[colorlinks,
            linkcolor=blue,
            anchorcolor=green,
            citecolor=magenta
           ]{hyperref}
\usepackage{graphicx}
\usepackage{tikz}
\usetikzlibrary{calc}
\oddsidemargin=0.3in
\evensidemargin=0.3in
\topmargin=-0.2in
\textwidth=15.4cm
\textheight=23cm 




\usepackage{authblk}

\long\def\delete#1{}

\usepackage{color}

\definecolor{Blue}{rgb}{0,0,1}
\definecolor{Red}{rgb}{1,0,0}
\definecolor{DarkGreen}{rgb}{0,0.6,0}
\definecolor{DarkYellow}{rgb}{1,1,0.2}
\definecolor{DarkPurple}{rgb}{.6,0,1}

\usepackage{xcolor}
\usepackage[normalem]{ulem}

\usepackage{cleveref}
\crefformat{section}{\S#2#1#3}
\crefformat{subsection}{\S#2#1#3}
\crefformat{subsubsection}{\S#2#1#3}
\crefrangeformat{section}{\S\S#3#1#4 to~#5#2#6}
\crefmultiformat{section}{\S\S#2#1#3}{ and~#2#1#3}{, #2#1#3}{ and~#2#1#3}

\def\q{\hfill\rule{1ex}{1ex}}

\begin{document}
\setcounter{page}{1}
\newtheorem{thm}{Theorem}[section]
\newtheorem{fthm}[thm]{Fundamental Theorem}
\newtheorem{dfn}[thm]{Definition}
\newtheorem*{rem}{Remark}
\newtheorem{lem}[thm]{Lemma}
\newtheorem{cor}[thm]{Corollary}
\newtheorem{exa}[thm]{Example}
\newtheorem{prop}[thm]{Proposition}
\newtheorem{prob}[thm]{Problem}
\newtheorem{fact}[section]{Fact}
\newtheorem{con}[thm]{Conjecture}
\renewcommand{\thefootnote}{}
\newcommand{\remark}{\vspace{2ex}\noindent{\bf Remark.\quad}}
\newtheorem{ob}[thm]{Observation}
\newcommand{\rmnum}[1]{\romannumeral #1}
\renewcommand{\abovewithdelims}[2]{%
\genfrac{[}{]}{0pt}{}{#1}{#2}}

\newcommand\Sy{\mathrm{S}}
\newcommand\Cay{\mathrm{Cay}}
\newcommand\tw{\mathrm{tw}}
\newcommand\supp{\mathrm{supp}}


\def\qed{\hfill$\Box$\vspace{11pt}}

\title {\bf  Treewidth of the $q$-Kneser graphs}

\author{Mengyu Cao\thanks{ E-mail: \texttt{caomengyu@mail.bnu.edu.cn}}}
\author{Ke Liu\thanks{E-mail: \texttt{liuke17@mails.tsinghua.edu.cn}}}
\author{Mei Lu\thanks{E-mail: \texttt{lumei@tsinghua.edu.cn}}}
\author{Zequn Lv\thanks{Corresponding author. E-mail: \texttt{lvzq19@mails.tsinghua.edu.cn}}}

\affil{\small Department of Mathematical Sciences, Tsinghua University, Beijing 100084, China}

\date{}

\openup 0.5\jot
\maketitle

\begin{abstract}
Let $V$ be an $n$-dimensional vector space over a finite field $\mathbb{F}_q$, where $q$ is  a prime power. Define  the \emph{generalized $q$-Kneser graph} $K_q(n,k,t)$ to be the graph whose vertices are the $k$-dimensional subspaces of $V$ and two vertices $F_1$ and $F_2$ are adjacent if $\dim(F_1\cap F_2)<t$. Then $K_q(n,k,1)$ is the well-known $q$-Kneser graph. In this paper,
 we determine the  treewidth of $K_q(n,k,t)$ for $n\geq 2t(k-t+1)+k+1$ and $t\ge 1$ exactly. Note that  $K_q(n,k,k-1)$ is  the complement of the Grassmann graph $G_q(n,k)$. We give a more precise result for the treewidth of $\overline{G_q(n,k)}$ for any possible $n$, $k$ and $q$.

\vspace{2mm}

\noindent{\bf Key words}\ \ treewidth,  tree decomposition, $q$-Kneser graph, generalized $q$-Kneser graph, Grassmann graph

\

\noindent{\bf MSC2010:} \   05C75, 05D05

\end{abstract}

\section{Introduction}

Let $G=(V,E)$ be a finite, simple and undirected graph. For $v\in V$, the degree of $v$ in $G$, written as $d_G(v)$, is the number of edges incident with $v$ in $G$.  Let $\Delta(G)$ be the maximum degree of $G$.   A subset $S$ of $V(G)$ is called \emph{independent set} if no two elements in $S$ are adjacent in $G$.  The \emph{independence number} of $G$, denoted by $\alpha(G)$, is the  maximum size of independent sets in $G$. We will use  $\overline{G}$ to denote the complement of  $G$.

\begin{dfn}
{\em Let $G$ be a graph, $T$ a tree and $(V_{t})_{t\in V(T)}$ be a family of vertex sets  $V_{t}\subseteq V(G)$ indexed by the vertices $t\in V(T)$. A \emph{tree decomposition} of  $G$ is a pair $(T,(V_{t})_{t\in V(T)})$ if it  satisfies the following three conditions:
\begin{itemize}
\item[{\rm(i)}] $V(G)=\cup_{t\in V(T)}V_{t}$;
\item[{\rm(ii)}] for every edge $uv\in E(G)$, there is a $t\in V(T)$ such that $u,v\in V_{t}$;
\item[{\rm(iii)}] for every $v\in V(G)$, the subgraph of $T$ induced by $\{t\in V(T)\mid v\in V_{t}\}$ is  connected.
\end{itemize}}
\end{dfn}

The \emph{width} of the decomposition $(T,(B_{t})_{t\in V(T)})$ is  $\max\{|B_{t}|\mid t\in V(T)\}-1.$ The \emph{treewidth} of  $G$, denoted by $\tw(G)$, is the least width of any tree decompositions of $G$. 

Treewidth is a well-studied parameter in modern graph theory that shows the alikeness of the graph with a tree.  On the one hand, it is an important variable in structural graph theory. Treewidth was introduced by Robertson and Seymour
in their series of fundamental papers on graph minors, for example, we refer the reader to  \cite{Robertson3,Robertson2,Robertson}.  On the other hand, treewidth is also a key parameter in  algorithm design. The problem of deciding whether a graph has tree decomposition of treewidth at most $k$ is NP-complete \cite{A}. Besides, it has been shown that many NP-hard combinatorial problems can be solved in polynomial time for graph with treewidth bounded by a constant \cite{Bodlaender,Bo}. In the past few decades, there are lots of literatures investigate the treewidth of certain graphs, for example, \cite{Wood,Wood2,Kloks,Li,Mitsche,Wood3}. However, it is difficult to determine the treewidth exactly in most situations, and there are only few papers obtained the exact  treewidth of some certain graphs. In 2014, the  treewidth of the Kneser graphs were determined exactly by Harvey et al. \cite{Wood}. In 2020, Liu et al. \cite{Liu} determined the exact treewidth of the generalized Kneser graphs. Motivated by these two results, we study the exact value of treewidth of the generalized $q$-Kneser graphs in this paper.

 Let $n,k\in \mathbb{Z}^{\rm +}$ with $1 \le k \le n$, and $V$ an $n$-dimensional vector space over the finite field $\mathbb{F}_q$, where $q$ is  a prime power. Denote by ${V\brack k}$  the family of all $k$-dimensional subspaces of $V$.  In the sequel we will abbreviate ``$k$-dimensional subspace'' to ``$k$-subspace''. Let  $a,b\in \mathbb{Z}^{\rm +}$. The \emph{Gaussian binomial coefficient} is defined as
$$
{a\brack b} = \prod_{0\leq i<b}\frac{q^{a-i}-1}{q^{b-i}-1}.
$$
In addition, we set ${a\brack 0}=1$ and ${a\brack c} =0$ if $c<0$. Recall that $|{V\brack k}|={n\brack k}$. For any $t\in \mathbb{Z}^{\rm +}$, a family $\mathcal{F}\subseteq{V\brack k}$ is said to be $t$-\emph{intersecting} if $\dim(A\cap B)\geq t$ for all $A,B\in\mathcal{F}$.

Let $n,k,t\in \mathbb{Z}$ with $1\leq t< k \leq n$. Write $[n]=\{1,2,\ldots,n\}$ and denote by ${[n]\choose k}$ the collection of all $k$-subsets of $[n].$ The \emph{generalized Kneser graph}, denoted by $K(n,k,t)$, is a graph whose vertex set is ${[n]\choose k}$ and two vertices $A$ and $B$ are adjacent if $|A\cap B|<t$. The graph $K(n,k,1)$ is the well-known \emph{Kneser graph} \cite{Kneser,Lovasz}. Define  $K_q(n,k,t)$ to be the \emph{generalized $q$-Kneser graph} for $1\leq t<k$ whose vertex set is ${V\brack k}$ and two vertices $F_1$ and $F_2$ are adjacent if $\dim(F_1\cap F_2)<t$. When $t=1$,  $K_q(n,k,1)$ is the well-known \emph{$q$-Kneser graph}. When $t=k-1$, the graph $K_q(n,k,k-1)$, usually denoted by $\overline{G_q(n,k)}$, is the complement of the  \emph{Grassmann graph} $G_q(n,k)$.
  Over the years several aspects of $q$-Kneser graphs and Grassmann graphs such as chromatic number, energy, eigenvalues and some other properties had been widely studied as one can find in, for example, \cite{Blokhuis,Huang,Lv,Lv2,Numata,Tanaka}.

We  know that the  famous  Erd\H{o}s-Ko-Rado  Theorem \cite{Erdos-Ko-Rado-1961-313} has a well-known relationship to the independent number of the generalized Kneser graphs, since an independent set in the generalized Kneser graph $K(n,k,t)$ is a $t$-intersecting family of ${[n]\choose k}$. Similarly, the  Erd\H{o}s-Ko-Rado  Theorem for vector spaces \cite{Tanaka-2006-903} also has a well-known relationship to the independent number of the generalized $q$-Kneser graphs. In this paper, we will use such relationship to obtained the following two main results.

\begin{thm}\label{GKneser}
Let $k>t\geq 1$ and $n\geq 2t(k-t+1)+k+1$. Then $$\tw(K_q(n, k,t))= {n\brack k}-{n-t\brack k-t}-1.$$
\end{thm}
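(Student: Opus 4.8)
The plan is to establish the treewidth value by proving matching upper and lower bounds, both of which rest on the $q$-analogue of the Erd\H{o}s--Ko--Rado theorem for $t$-intersecting families. Let me write $N={n\brack k}$ for the total number of vertices and $M={n-t\brack k-t}$ for the size of a canonical $t$-intersecting family (all $k$-subspaces containing a fixed $t$-subspace). The key point is that for $n\geq 2t(k-t+1)+k+1$ the EKR theorem for vector spaces tells us that $\alpha(K_q(n,k,t))=M$, with the maximum independent sets being exactly the ``dictatorships'' $\mathcal{F}_T=\{F\in{V\brack k}:T\subseteq F\}$ for $T\in{V\brack t}$.

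For the \textbf{upper bound}, I would exhibit an explicit tree decomposition (in fact a path decomposition) of width $N-M-1$. Fix a $t$-subspace $T$ and let $\mathcal{F}_T$ be the corresponding maximum independent set, so $|\mathcal{F}_T|=M$. Order the vertices of $\mathcal{F}_T$ as $F_1,\dots,F_M$; since $\mathcal{F}_T$ is independent, no two $F_i,F_j$ are adjacent. Take a path on $M$ nodes and at the $i$-th node place the bag $B_i=(V(K_q(n,k,t))\setminus\mathcal{F}_T)\cup\{F_i\}$. Each bag has size $(N-M)+1$, so the width is $N-M-1$. Conditions (i) and (iii) of the tree decomposition definition are immediate; condition (ii) holds because every edge has at least one endpoint outside $\mathcal{F}_T$ (independence), and that endpoint lies in every bag.

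For the \textbf{lower bound}, I would use the standard fact that $\tw(G)\geq |V(G)|-\alpha(G)-1$ whenever... actually this is not automatically tight, so the right tool is the bramble/separator machinery: a classical lemma states $\tw(G)\geq |V(G)| - \alpha(G) - s$ is generally false, so instead I would invoke the result (used in both Harvey et al. and Liu et al.) that if every set of $\leq \tw(G)$ vertices whose removal disconnects $G$ must be ``large'' then one gets a bound. Concretely, the cleanest route: $\tw(G)\geq |V(G)| - 1 - \alpha(G)$ does hold here because one shows every tree decomposition has a bag of size at least $N-M$; this follows by a counting/connectivity argument showing that the complement of any bag, if too small, would have to contain two vertices from distinct maximum independent sets that are nonadjacent, contradicting a structural property. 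The serious input is that $K_q(n,k,t)$ has no independent set larger than $M$ AND that the union of any two distinct maximum independent sets is ``spread out'' enough; the uniqueness/stability part of the $q$-EKR theorem is what makes the lower bound match. I expect the \textbf{main obstacle} to be precisely this lower-bound argument: one must show that for every tree decomposition there is a bag missing at most $M$ vertices, which typically requires finding, inside $K_q(n,k,t)$, a bramble of order $N-M$ (equivalently, showing that no balanced separator of size $<N-M$ exists). Building such a bramble from the collection of maximum independent sets $\{\mathcal{F}_T:T\in{V\brack t}\}$—using that any two of them intersect or are joined by edges, and that each is a clique in the complement—is the technical heart, and the dimension bound $n\geq 2t(k-t+1)+k+1$ is exactly what guarantees the EKR extremal characterization needed for this step.
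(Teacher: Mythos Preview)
Your upper-bound construction is off by one. The bags $B_i=(V\setminus\mathcal F_T)\cup\{F_i\}$ each have size $(N-M)+1$, so the width of that path decomposition is $N-M$, not $N-M-1$; the general bound one gets this way is only $\tw(G)\le |V(G)|-\alpha(G)$. The paper closes this gap by invoking the Harvey--Wood lemma $\tw(G)\le\max\{\Delta(G),\,|V(G)|-\alpha(G)-1\}$ and then verifying, via a direct count of $k$-subspaces meeting a fixed one in each possible dimension, that $\Delta(K_q(n,k,t))<{n\brack k}-{n-t\brack k-t}$. Without that degree comparison (or some equivalent refinement of the decomposition) your upper bound does not reach the target.

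For the lower bound you have the right instrument (balanced separators) but the wrong diagnosis of where the hypothesis $n\ge 2t(k-t+1)+k+1$ enters. It is \emph{not} needed for the EKR characterization of extremal families---Theorem~\ref{EKR-vectorspace} already gives $\alpha(K_q(n,k,t))={n-t\brack k-t}$ for all $n\ge 2k$---and the paper's argument does not use the uniqueness of maximum independent sets at all. Instead, one assumes a $\tfrac23$-separator $X$ with $|X|<N-M$, so both sides $\mathcal A,\mathcal B$ of the resulting split have size exceeding $\tfrac13 M$ and one side, say $\mathcal A$, contains an edge $u_1u_2$. Every $v\in\mathcal B$ then contains some $t$-subspace of $u_1$ and some $t$-subspace of $u_2$; pigeonholing over the ${k\brack t}^2$ such pairs forces more than ${k\brack t}^{-2}\cdot\tfrac13{n-t\brack k-t}$ elements of $\mathcal B$ to contain a fixed subspace of dimension at least $t+1$, hence at most ${n-t-1\brack k-t-1}$ of them. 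The inequality ${n-t-1\brack k-t-1}\le{k\brack t}^{-2}\cdot\tfrac13{n-t\brack k-t}$ is exactly what the bound $n\ge 2t(k-t+1)+k+1$ is calibrated to guarantee. Your bramble sketch built from the dictatorships $\mathcal F_T$ does not obviously produce this, and in any case would need a separate quantitative input to match the stated threshold on $n$.
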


Another result is about the exact value of  $\tw(\overline{G_q(n,k)})$ for any $n$ and $k$. Note that $\overline{G_q(n,k)}$ is an null graph when $n<k+2$. Thus we only consider the case with $n\geq k+2$.

\begin{thm}\label{CJohnson}
Let $n$ and $k$ be positive integers with $n\geq k+2$ and $k\geq 2$. Then
$$
{\tw}(\overline{G_q(n,k)}) = \begin{cases}
q^4+O(q^3), & \mbox{if}\ k=2 \ \mbox{and}\ n=4,\\
{n\brack k}-{n-k+1\brack 1}-1, & \mbox{otherwise}.
\end{cases}
$$
\end{thm}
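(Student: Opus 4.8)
The plan is to exploit the same general framework used for Theorem \ref{GKneser}, namely that the treewidth of a vertex-transitive-ish graph can be pinned between a lower bound coming from a well-chosen separator/bramble argument and an upper bound coming from an explicit tree decomposition built around a large independent set. For the complement of the Grassmann graph, an independent set of $K_q(n,k,k-1)$ is exactly a $(k-1)$-intersecting family in ${V\brack k}$; by the $q$-analogue of the Erd\H{o}s--Ko--Rado / Hilton--Milner type theorems (and Frankl--Wilson / Tanaka), such a family has maximum size $\alpha := {n-k+1\brack 1}$ in all cases \emph{except} the small sporadic case $k=2$, $n=4$, where the two parallel classes of lines on a quadric (or rather the structure of generators) make the extremal configuration different and of size $\Theta(q^2)$ rather than $q^3+q^2+q+1$. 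This is exactly why that case splits off. So the first step is to record the precise value of $\alpha(\overline{G_q(n,k)})$ for every $(n,k,q)$, isolating $k=2,n=4$.

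Second, I would prove the upper bound $\tw \le {n\brack k} - \alpha - 1$ in the generic case. The standard construction: take a maximum independent set $\mathcal{I}$ (here a ``dictatorship'' family, all $k$-spaces containing a fixed $(k-1)$-space $U$, or all $k$-spaces inside a fixed $(k+1)$-space), and build a tree decomposition whose bags all consist of $V(G)\setminus \mathcal{I}$ together with one extra vertex of $\mathcal{I}$, arranged in a star or path; one checks conditions (i)--(iii) of the tree-decomposition definition directly, using that $\mathcal{I}$ is independent so no edge lies inside $\mathcal{I}$. This gives width $({n\brack k}-\alpha)+1-1 = {n\brack k}-\alpha$... wait, one must be slightly careful: the bag size is $|V\setminus\mathcal I|+1 = {n\brack k}-\alpha+1$, hence width ${n\brack k}-\alpha$, which is one too big, so in fact one needs the refinement (as in Harvey--Wood and Liu et al.) that removes one further vertex per bag or uses that $\overline{G_q(n,k)}\setminus \mathcal I$ is not complete — this is where a little graph-specific work enters, and I'd mirror whatever lemma the paper proves for Theorem \ref{GKneser}.

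Third, the lower bound $\tw \ge {n\brack k}-\alpha-1$. Here I would use the fact (Harvey--Wood) that for any graph $G$ on $N$ vertices, $\tw(G) \ge N - \alpha(G) - 1$ provided $G$ has a certain connectivity/expansion property, or more robustly use a bramble of order ${n\brack k}-\alpha$: take all sets of the form $V(G)\setminus(\mathcal I_v)$ where $\mathcal I_v$ ranges over maximum independent sets, plus singletons, and argue pairwise touching via the fact that the union of any two maximum independent sets is not all of $V(G)$ (again false precisely when $n=k+2$ or the sporadic case, forcing the $\smash{n-k+1\brack 1}$ bound and the separate treatment). The main obstacle I anticipate is precisely the boundary behaviour at $n=k+2$ and at $k=2,n=4$: there ${n\brack k}$ is small relative to $\alpha$, maximum independent sets are not unique up to the obvious symmetry, and the clean "two independent sets don't cover $V$" argument can fail, so the bramble has to be built more carefully (or replaced by an explicit separator count). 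For the sporadic $k=2,n=4$ case one cannot hope for an exact closed form of the stated type, which is why the theorem only claims $q^4+O(q^3)$; proving that requires separately estimating $\alpha(\overline{G_q(4,2)})$ and then running the generic upper/lower bound machinery with that estimate, absorbing lower-order terms into the $O(q^3)$.
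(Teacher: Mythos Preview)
Your diagnosis of why $(n,k)=(4,2)$ is sporadic is wrong, and this misdirects the plan. The independence number there is \emph{not} anomalous: since $n=2k$, the vector-space Erd\H{o}s--Ko--Rado theorem still gives $\alpha(\overline{G_q(4,2)})={3\brack 1}=q^2+q+1$, exactly the generic value ${n-k+1\brack 1}$. The case splits off because the \emph{lower-bound} argument breaks, not because $\alpha$ changes. The paper's lower bound is a separator argument (not a bramble): assuming $\tw<{n\brack k}-\alpha-1$, a $\tfrac23$-separator $X$ of that size leaves $|V\setminus X|>\alpha$, forcing an edge $v_1v_2$ in one part $\mathcal A$ of the induced bipartition; one shows $\dim(v_1\cap v_2)=k-2$, and then a short case analysis (does the other part $\mathcal B$ contain an edge or not?) bounds $|V\setminus X|$ by roughly $2(q+1)^2$. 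This contradicts $|V\setminus X|>{n-k+1\brack 1}$ precisely when $n\ge k+3$. For $(4,2)$ one only extracts $q^4+q^2-1\le\tw\le q^4+q^3+q^2-1$, whence the $O(q^3)$ slack; no separate estimate of $\alpha$ is involved.

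There is a second genuine gap. Both the EKR input and your sketch implicitly need $n\ge 2k$, so the range $k+2\le n<2k$ (in particular $n=k+2$ for $k\ge 3$, which you flag as a failure point) is not covered by what you wrote. The paper closes this via the isomorphism $\overline{G_q(n,k)}\cong\overline{G_q(n,n-k)}$: for $n<2k$ pass to $k'=n-k$, where $n>2k'$ and (since $k\ge 3$) $n\ge k'+3$, and apply the lemmas to $(n,k')$. Without this duality step the argument does not reach the full range in the statement. As for the upper bound, the paper does not build a tree decomposition by hand; it cites the Harvey--Wood inequality $\tw(G)\le\max\{\Delta(G),\,|V(G)|-\alpha(G)-1\}$ and checks $\Delta<|V|-\alpha$, which resolves your off-by-one without any extra construction.
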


The rest of this paper is organized as follows. In Section \ref{thm1}, we will give the exact value of $\tw(K_q(n, k,t))$ for $k>t\geq1$ and large $n$ corresponding to $k$ and $t$. After that, we will study the treewidth of the complement of Grassman graphs for any possible $n$ and $k$ in Section \ref{thm2}.

\section{Treewidth of $K_q(n, k,t)$}\label{thm1}

\subsection{Upper bound for treewidth in Theorem~\ref{GKneser}}
In this subsection, we  give an upper bound  of $\tw(K_q(n, k,t))$ with the help of the following famous Erd\H{o}s-Ko-Rado Theorem for vector spaces.

\begin{thm}{\rm (\cite{Tanaka-2006-903})}\label{EKR-vectorspace}
Let $n,k,t\in \mathbb{Z}^{\rm +}$ with $2k\leq n$ and $t\leq k.$ If $\mathcal{F}\subseteq{V\brack k}$ is $t$-intersecting, then
$$
|\mathcal{F}|\leq {n-t\brack k-t}.
$$
Equality holds if and only if either
\begin{itemize}
\item[{\rm(i)}] $\mathcal{F}$ consists of all $k$-subspaces of $V$ which contain a fixed $t$-subspace of $V$, or
\item[{\rm(ii)}] $n = 2k$ and $\mathcal{F}$ consists of all $k$-subspaces of a
fixed $(n-t)$-subspace of $V$.
\end{itemize}
\end{thm}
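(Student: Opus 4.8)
My plan is to treat this $q$-analogue of the $t$-intersecting Erd\H{o}s--Ko--Rado / Frankl--Wilson theorem along the standard two-stage route: obtain the cardinality bound from the Delsarte linear-programming (ratio) bound in the Grassmann association scheme, and obtain the equality classification from the theory of subsets of small width / dual width in $Q$-polynomial schemes. For the bound I would work in the Grassmann scheme $J_q(n,k)$ on $\Omega={V\brack k}$, whose relations are the sets of ordered pairs with a prescribed value of $\dim(A\cap B)$ and whose eigenspaces $V_0,\dots,V_k$ are explicitly known; it is metric (generated by the Grassmann graph) and $Q$-polynomial, so every matrix in its Bose--Mesner algebra is a polynomial in the Grassmann adjacency matrix, with eigenvalues the corresponding values of the ($q$-analogue of the) Eberlein eigenvalues. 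A $t$-intersecting family $\mathcal F$ is a coclique of the graph with adjacency $\dim(A\cap B)<t$, and more flexibly of any nonnegative combination $M=\sum_R c_R A_R$ over the relations $R$ supported on pairs with $\dim(A\cap B)<t$. Choosing the weights $c_R$ to solve the LP --- equivalently, choosing $M$ whose least eigenvalue is attained on $V_1\oplus\dots\oplus V_t$ --- and applying the Hoffman bound gives $|\mathcal F|\le|\Omega|\cdot\frac{-\tau}{d-\tau}$ with $\frac{-\tau}{d-\tau}={n-t\brack k-t}\big/{n\brack k}$, hence $|\mathcal F|\le{n-t\brack k-t}$; the inequalities among the scheme eigenvalues that this argument needs are exactly where the hypothesis $n\ge 2k$ is used.

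For the easy half of the equality statement I would just verify the two candidate families. In (i), the $k$-subspaces containing a fixed $t$-subspace $T_0$ correspond to the $(k-t)$-subspaces of $V/T_0\cong\mathbb{F}_q^{n-t}$, so there are ${n-t\brack k-t}$ of them and any two contain $T_0$. In (ii), when $n=2k$ the $k$-subspaces lying inside a fixed $(n-t)$-subspace $W$ number ${2k-t\brack k}={n-t\brack k-t}$, and any two of them meet in dimension at least $2k-(2k-t)=t$.

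The real content is the converse. Equality in the LP bound forces the mean-centred indicator vector $\mathbf{1}_{\mathcal F}-\frac{|\mathcal F|}{|\Omega|}\mathbf{1}$ to lie in $V_1\oplus\dots\oplus V_t$, i.e.\ $\mathcal F$ has dual width at most $t$; combined with the obvious fact that its width is $k-t$, the subset $\mathcal F$ is ``tight'' (width plus dual width equals the diameter $k$) in the sense of Brouwer--Godsil--Koolen--Martin. I would then invoke their structure theorem for tight subsets together with Tanaka's classification of the resulting ``descendants'' in the Grassmann (and bilinear-forms, dual-polar) schemes to conclude that $\mathcal F$ is a descendant of $J_q(n,k)$; these are precisely the stars $\{F:T_0\subseteq F\}$ through a $t$-subspace $T_0$, except that when $n=2k$ the self-duality of the $k$-subspaces of $\mathbb{F}_q^{2k}$ converts a star into the dual family $\{F:F\subseteq W\}$ with $\dim W=2k-t$. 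I would also run through the degenerate cases ($t=1$, $t=k$, and the boundary $n=2k$) directly, to be sure the list of extremal families is exactly (i)--(ii) with no sporadic extras.

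The main obstacle is this converse --- turning the algebraic tightness condition (the mean-centred indicator lying in $V_1\oplus\dots\oplus V_t$) into the combinatorial conclusion that $\mathcal F$ is a star, or a dual star when $n=2k$, which is the dual-width classification in the Grassmann scheme. The cardinality bound itself is a fairly mechanical ratio-bound/LP computation once the scheme's eigenvalues are written down. If one wished to avoid association-scheme machinery, an alternative would be to develop a shifting/compression operation for subspaces and then run a Hilton--Milner-type induction eliminating all non-stars; this would reach the same classification but with noticeably heavier case analysis, especially in the regime near $n=2k$.
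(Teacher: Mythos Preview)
The paper does not prove this statement at all: Theorem~\ref{EKR-vectorspace} is quoted from \cite{Tanaka-2006-903} as a black-box input and is used only to read off $\alpha(K_q(n,k,t))={n-t\brack k-t}$ and, in the $t=k-1$ case, the structure of maximum independent sets. So there is no ``paper's own proof'' to compare your proposal against.

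That said, your outline is a faithful sketch of the standard association-scheme proof that underlies the cited reference: Delsarte/Hoffman LP bound in the Grassmann scheme for the inequality, and the width/dual-width (tight-set) classification for the equality case, which is precisely Tanaka's contribution in \cite{Tanaka-2006-903}. The only comment I would make is that you should be careful about where the hypothesis $n\ge 2k$ enters: it is not merely that the relevant eigenvalue inequalities hold, but that for $n<2k$ the LP optimum can shift and additional extremal families (beyond (i) and (ii)) appear in the full Frankl--Wilson range; your parenthetical about ``no sporadic extras'' should explicitly use $n\ge 2k$ to rule those out. For the purposes of the present paper, however, none of this is needed---the theorem is simply cited.
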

The result of  Theorem \ref{EKR-vectorspace} is clearly equivalent to the independent number of the generalized $q$-Kneser graph $K_q(n,k,t)$. That is,
\begin{equation}\label{independent}
\alpha(K_q(n,k,t))={n-t\brack k-t}
\end{equation}
for $n\geq 2k$.

The following lemma can be easily proved.

\begin{lem}\label{lem1-1-1}
Let $m$ and $i$ be positive integers with $ m\ge i.$ Then the following results hold:
\begin{itemize}
\item[{\rm (i)}] ${m\brack i}={m-1\brack i-1}+q^i{m-1\brack i}$ and ${m\brack i}=\frac{q^m-1}{q^i-1}\cdot{m-1\brack i-1}$;
\item[{\rm (ii)}] $q^{m-i}<\frac{q^m-1}{q^i-1}<q^{m-i+1}$ and $q^{i-m-1}<\frac{q^i-1}{q^m-1}<q^{i-m}$ if $i <m$;
\item[{\rm(iii)}] $q^{i(m-i)}\leq{m\brack i}< q^{i(m-i+1)}$, and $q^{i(m-i)}<{m\brack i}$ if $i <m$.
\end{itemize}
\end{lem}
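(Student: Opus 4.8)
The strategy is to verify each identity by unwinding the definition ${m\brack i}=\prod_{0\le j<i}\frac{q^{m-j}-1}{q^{i-j}-1}$ and then to bound the resulting rational expressions termwise. First I would establish (i). For the second identity in (i), I would factor the $j=0$ term out of the product defining ${m\brack i}$: the numerator contributes $q^m-1$ and the denominator $q^i-1$, and the remaining product $\prod_{1\le j<i}\frac{q^{m-j}-1}{q^{i-j}-1}$ is exactly ${m-1\brack i-1}$ after reindexing $j\mapsto j-1$ (replacing $m,i$ by $m-1,i-1$). For the first identity in (i), the cleanest route is a counting argument: a $k$-subspace of $\mathbb{F}_q^m$ either lies inside a fixed hyperplane $H$ of dimension $m-1$ — there are ${m-1\brack i}$ such — or it is not contained in $H$, in which case intersecting with $H$ gives an $(i-1)$-subspace of $H$ and each such $(i-1)$-subspace extends in exactly $q^i$ ways to an $i$-subspace not lying in $H$ (the number of points of $\mathbb{F}_q^m\setminus H$ modulo the subspace, namely $(q^m-q^{m-1})/(q^i-q^{i-1})=q^{m-i}$… here I would be slightly careful and instead just cite the standard $q$-Pascal recurrence, or derive it algebraically from the product formula by writing $q^{m-j}-1=(q^{m-1-j}-1)+q^{m-1-j}(q-1)$-type manipulations). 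Either way this is routine and I expect no obstacle.

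Next, (ii): the inequalities on $\frac{q^m-1}{q^i-1}$ when $i<m$. Writing $\frac{q^m-1}{q^i-1}$, the lower bound $q^{m-i}<\frac{q^m-1}{q^i-1}$ is equivalent to $q^m-1>q^m-q^{m-i}$, i.e. $q^{m-i}>1$, which holds since $m>i$; the upper bound $\frac{q^m-1}{q^i-1}<q^{m-i+1}$ is equivalent to $q^m-1<q^{m+1}-q^{m-i+1}$, i.e. $q^{m-i+1}<q^{m+1}-q^m+1$, which follows from $q^{m-i+1}\le q^m<q^{m+1}-q^m+1$ (using $q\ge2$). The reciprocal inequalities for $\frac{q^i-1}{q^m-1}$ follow by taking reciprocals of the just-proved bounds and flipping inequalities.

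Finally, (iii): the bounds on ${m\brack i}$ itself. I would apply (the proof technique of) part (ii) to each of the $i$ factors $\frac{q^{m-j}-1}{q^{i-j}-1}$ for $0\le j<i$. When $i<m$ each factor satisfies $q^{m-i}<\frac{q^{m-j}-1}{q^{i-j}-1}<q^{m-i+1}$ (the exponent $m-i$ is the same for every $j$ since $(m-j)-(i-j)=m-i$), so multiplying the $i$ factors gives $q^{i(m-i)}<{m\brack i}<q^{i(m-i+1)}$, which is the strict lower bound in (iii) and the stated upper bound. When $i=m$ we have ${m\brack i}=1=q^{i(m-i)}$, giving the non-strict lower bound $q^{i(m-i)}\le{m\brack i}$ in general, and the upper bound $q^{i(m-i+1)}=q^i>1$ still holds. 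The only minor care needed is handling the boundary case $j$ giving a factor $\frac{q^{m-j}-1}{q^{i-j}-1}$ when $i-j$ could be small, but since $i-j\ge1$ throughout the range $0\le j<i$ and $m-j>i-j$ exactly when $i<m$, the argument goes through uniformly. There is no real obstacle here — as the authors say, the lemma "can be easily proved"; the entire content is the two-line estimate in (ii) applied once in (ii) and $i$ times in (iii), plus the standard $q$-Pascal identity in (i).
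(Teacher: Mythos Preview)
Your proposal is correct in every part: the product-factoring for the second identity in (i), the citation of the $q$-Pascal recurrence for the first, the two-line algebraic manipulations in (ii), and the termwise application of (ii) to the $i$ factors in (iii), including the boundary case $i=m$. The paper itself supplies no proof of this lemma---it only remarks that it ``can be easily proved''---so there is nothing to compare against; your argument is exactly the standard one the authors presumably had in mind.
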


\begin{prop}{\rm(\cite[Lemma~9.3.2]{Brouwer})}\label{lem4}
Suppose $0\leq i,j\leq n.$ If $X$ is a $j$-subspace of $V$, then there are precisely $q^{(i-m)(j-m)}{n-j\brack i-m}{j\brack m}$ $i$-subspaces $Y$ in $V$ such that $\dim(X\cap Y)=m$.
\end{prop}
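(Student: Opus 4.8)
The plan is to count by first recording the intersection subspace $U:=X\cap Y$ and then reducing the remaining count to the standard fact that the number of subspaces meeting a fixed subspace trivially is a power of $q$ times a Gaussian binomial. Fix the $j$-subspace $X\subseteq V$. For every $i$-subspace $Y$ with $\dim(X\cap Y)=m$ the space $U=X\cap Y$ is an $m$-subspace of $X$, and there are ${j\brack m}$ choices of such a $U$. So it suffices to prove that, for a fixed $m$-subspace $U\subseteq X$, the number of $i$-subspaces $Y$ with $X\cap Y=U$ equals $q^{(i-m)(j-m)}{n-j\brack i-m}$ (in particular this is independent of $U$); multiplying by ${j\brack m}$ then gives the assertion. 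In degenerate ranges (e.g. $m>\min\{i,j\}$ or $i-m>n-j$) both sides vanish by the stated conventions for ${a\brack b}$, so I assume the nondegenerate range.

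For a fixed $U$, every $Y$ with $X\cap Y=U$ contains $U$, so I pass to the quotient $\pi\colon V\to V/U$, a space of dimension $n-m$. The key point is that the condition $X\cap Y=U$ (and not merely $X\cap Y\supseteq U$) is equivalent to $\pi(X)\cap\pi(Y)=\{0\}$: since $U\subseteq X$ one checks directly that $\pi(X)\cap\pi(Y)=\pi(X\cap Y)$, and $\pi(X\cap Y)=\{0\}$ iff $X\cap Y=U$. Consequently $Y\mapsto\pi(Y)$ is a bijection from $\{Y:\dim Y=i,\ X\cap Y=U\}$ onto the set of $(i-m)$-subspaces of $V/U$ meeting the fixed $(j-m)$-subspace $\pi(X)$ trivially, with inverse $\overline Y\mapsto\pi^{-1}(\overline Y)$. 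Thus the count I need equals the number of $(i-m)$-subspaces of an $(n-m)$-space that intersect a fixed $(j-m)$-subspace in $\{0\}$.

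It remains to establish the general fact: in an $N$-dimensional space the number of $a$-subspaces $Z$ with $Z\cap W=\{0\}$, for a fixed $b$-subspace $W$, is $q^{ab}{N-b\brack a}$. For this, the quotient $\rho\colon V\to V/W$ (dimension $N-b$) is injective on any such $Z$, so $\rho(Z)$ is an $a$-subspace of $V/W$; conversely, given an $a$-subspace $\overline Z$ of $V/W$, the subspaces $Z$ with $\rho(Z)=\overline Z$ and $Z\cap W=\{0\}$ are exactly the $a$-dimensional complements of $W$ inside the $(a+b)$-space $\rho^{-1}(\overline Z)$, of which there are $q^{ab}$ (extend a basis of $W$ by $a$ vectors, each modifiable by an arbitrary element of $W$, giving $(q^{b})^{a}$ distinct complements). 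Hence the count is ${N-b\brack a}\cdot q^{ab}$, and taking $N=n-m$, $a=i-m$, $b=j-m$ yields $q^{(i-m)(j-m)}{n-j\brack i-m}$; multiplying by ${j\brack m}$ finishes the proof. I do not expect a genuine obstacle: the only delicate steps are checking that the intersection dimension is controlled exactly (not just up to $\supseteq$) after taking quotients, and the $q$-power bookkeeping. An alternative route avoids quotients altogether: count ordered bases $(y_{1},\dots,y_{i})$ of $Y$ in which $y_{1},\dots,y_{m}$ is an ordered basis of $X\cap Y$ chosen inside $X$ (contributing $\prod_{\ell=0}^{m-1}(q^{j}-q^{\ell})$) and $y_{m+1},\dots,y_{i}$ are chosen successively outside $X+\langle y_{1},\dots,y_{\ell-1}\rangle$ (contributing $\prod_{\ell=0}^{i-m-1}(q^{n}-q^{j+\ell})$), then divide by the number $\bigl(\prod_{\ell=0}^{m-1}(q^{m}-q^{\ell})\bigr)\bigl(\prod_{\ell=m}^{i-1}(q^{i}-q^{\ell})\bigr)$ of ordered bases of $Y$ adapted to $X\cap Y$; simplifying the resulting ratio of products of the form $q^{c}-q^{d}$ reproduces $q^{(i-m)(j-m)}{n-j\brack i-m}{j\brack m}$.
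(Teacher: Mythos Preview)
Your argument is correct. The quotient reduction is carried out cleanly: the only subtle point is that $\pi(X)\cap\pi(Y)=\pi(X\cap Y)$ requires $U\subseteq X$ \emph{and} $U\subseteq Y$, and you have both since you restrict to those $Y$ containing $U$. The complement count $q^{ab}{N-b\brack a}$ is standard and your justification via lifting from $V/W$ is fine; the alternative ordered-basis count you sketch at the end also works and simplifies to the same expression.

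As for comparison with the paper: there is nothing to compare. The paper does not prove this proposition at all; it simply quotes it as \cite[Lemma~9.3.2]{Brouwer} and uses it as a black box (to compute $\Delta(K_q(n,k,t))$ in Lemma~\ref{upbound}). Your write-up therefore supplies a self-contained proof where the paper relies on an external reference. If anything, the ordered-basis route in your final paragraph is closest in spirit to how such identities are typically derived in the cited source.
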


\begin{prop}{\rm (\cite{Wood})}\label{upperbound}
For any graph $G$, $\tw(G)\leq\max\{\Delta(G),|V(G)|-\alpha(G)-1\}$.
\end{prop}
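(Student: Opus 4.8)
The plan is to prove this general bound by exhibiting an explicit tree decomposition of width at most $\max\{\Delta(G),|V(G)|-\alpha(G)-1\}$. Write $n=|V(G)|$ and fix a \emph{maximum} independent set $S\subseteq V(G)$, so that $|S|=\alpha(G)$ and the complementary set $W:=V(G)\setminus S$ has $|W|=n-\alpha(G)$. The one structural fact driving the whole argument is that, because $S$ is independent, every vertex $v\in S$ has all of its neighbours inside $W$; that is, writing $N_G(v)$ for the neighbourhood of $v$, we have $N_G(v)\subseteq W$, and moreover no two vertices of $S$ are adjacent.

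Next I would build a \emph{star-shaped} decomposition. Let $T$ be the tree with a central node $t_0$ and one leaf $t_v$ for each $v\in S$, where each leaf $t_v$ is joined only to $t_0$. Assign the bags
$$
B_{t_0}=W,\qquad B_{t_v}=\{v\}\cup N_G(v)\ \ (v\in S).
$$
I would then check the three axioms of Definition~2.1. For (i), coverage holds since $W\subseteq B_{t_0}$ and $v\in B_{t_v}$ for each $v\in S$. For (ii), an edge lying entirely in $W$ is contained in $B_{t_0}$; an edge $vw$ with $v\in S$ satisfies $w\in N_G(v)\subseteq W$, so both endpoints lie in $B_{t_v}$; and there are no edges inside $S$ by independence, so every edge is covered.

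The delicate axiom is (iii), the connectivity condition, and this is precisely where independence is used. Each $v\in S$ occurs in the single bag $B_{t_v}$ only: indeed $v\notin W=B_{t_0}$, and for $v'\in S\setminus\{v\}$ we have $v\notin N_G(v')$ since otherwise $vv'$ would be an edge inside $S$; a single bag forms a trivially connected subtree. Each $w\in W$ occurs in $B_{t_0}$ together with exactly those leaf bags $B_{t_v}$ for which $w\in N_G(v)$; since every such leaf is adjacent to the centre $t_0$, the set of nodes containing $w$ is a connected substar. Finally I would bound the width: $|B_{t_0}|=n-\alpha(G)$ and $|B_{t_v}|=1+d_G(v)\le 1+\Delta(G)$, so the largest bag has size at most $\max\{n-\alpha(G),\,1+\Delta(G)\}$, giving width at most $\max\{n-\alpha(G)-1,\,\Delta(G)\}$ and hence $\tw(G)\le\max\{\Delta(G),|V(G)|-\alpha(G)-1\}$. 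The main obstacle is purely the verification of (iii) for the independent-set vertices (one must argue each lies in exactly one bag); the remaining steps are routine bookkeeping, and one should note the degenerate case where $G$ has no edges is handled correctly by the same construction.
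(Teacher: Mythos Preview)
Your proof is correct: the star-shaped decomposition with centre bag $W=V(G)\setminus S$ and leaf bags $\{v\}\cup N_G(v)$ for $v\in S$ satisfies all three axioms, and your width computation is clean. Note, however, that the paper does not give its own proof of this proposition at all --- it simply quotes the result from Harvey and Wood~\cite{Wood} --- so there is nothing in the present paper to compare against; your argument is in fact essentially the standard construction from that reference.
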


\begin{lem}\label{upbound}
If $n,k$ and $t$ be positive integers with $2k\leq n$ and $t\leq k$, then
\begin{equation}\label{upperbound1}
\tw(K_q(n,k,t))\leq {n\brack k}-{n-t\brack k-t}-1.
\end{equation}
\end{lem}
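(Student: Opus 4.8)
The plan is to invoke Proposition~\ref{upperbound} directly and show that the two quantities in its bound are both dominated by $\binom{n}{k}_q - \binom{n-t}{k-t}_q - 1$ (writing Gaussian coefficients in the paper's bracket notation). By Proposition~\ref{upperbound}, $\tw(K_q(n,k,t)) \leq \max\{\Delta(K_q(n,k,t)),\, |V(K_q(n,k,t))| - \alpha(K_q(n,k,t)) - 1\}$. Since $|V(K_q(n,k,t))| = {n\brack k}$ and, by \eqref{independent}, $\alpha(K_q(n,k,t)) = {n-t\brack k-t}$ (using $n \geq 2k$), the second term in the maximum is exactly ${n\brack k} - {n-t\brack k-t} - 1$, which is the claimed upper bound. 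So the entire content of the lemma is reducing to showing
$$
\Delta(K_q(n,k,t)) \leq {n\brack k} - {n-t\brack k-t} - 1.
$$

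For the degree computation, note that $K_q(n,k,t)$ is vertex-transitive, so every vertex $F$ has the same degree; fix a $k$-subspace $F$. A $k$-subspace $F'$ is adjacent to $F$ iff $\dim(F\cap F') < t$, i.e. $\dim(F\cap F') \in \{0,1,\ldots,t-1\}$. Equivalently, $F'$ is \emph{non}-adjacent to $F$ (and $F'\neq F$ is not forced here) iff $\dim(F\cap F')\ge t$. Counting via Proposition~\ref{lem4} with $i=j=k$: the number of $k$-subspaces $F'$ with $\dim(F\cap F')=m$ is $q^{(k-m)^2}{n-k\brack k-m}{k\brack m}$. Hence
$$
d(F) = {n\brack k} - \sum_{m=t}^{k} q^{(k-m)^2}{n-k\brack k-m}{k\brack m}.
$$
So it suffices to prove $\sum_{m=t}^{k} q^{(k-m)^2}{n-k\brack k-m}{k\brack m} \geq {n-t\brack k-t} + 1$. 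I would isolate the $m=t$ term: $q^{(k-t)^2}{n-k\brack k-t}{k\brack t}$, and show that this single term already exceeds ${n-t\brack k-t}$, so that adding the (nonempty, since $m=k$ contributes the term $\binom{k}{k}_q\binom{n-k}{0}_q = 1$) remaining nonnegative terms gives the strict inequality, hence the $+1$.

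The main obstacle is the inequality $q^{(k-t)^2}{n-k\brack k-t}{k\brack t} \geq {n-t\brack k-t}$, or rather its refinement to ``$>$'' when combined with the tail. This is a pure Gaussian-binomial estimate, and Lemma~\ref{lem1-1-1}(iii) is exactly the tool: bounding ${n-t\brack k-t} < q^{(k-t)(n-k+1)}$ from above and ${n-k\brack k-t} > q^{(k-t)(n-2k+t)}$, ${k\brack t} \geq q^{t(k-t)}$ from below (using $n\ge 2k$ so the relevant indices are in range), one gets a left side of order at least $q^{(k-t)^2 + (k-t)(n-2k+t) + t(k-t)} = q^{(k-t)(n-k)}$ against a right side below $q^{(k-t)(n-k+1)}$; the hypothesis $n \geq 2t(k-t+1)+k+1$ (which is far more generous than $n\ge 2k$ for $t\ge 1$) gives ample room to absorb the discrepancy and force strict inequality. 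I would carry out this bookkeeping carefully, treating the edge case $t=k$ (where the sum is just the single term $1$ and ${n-t\brack k-t}={n-k\brack 0}=1$, so the inequality ``$1 \ge 1+1$'' fails — but then $K_q(n,k,k)$ has no edges at all since $\dim(F\cap F')<k$ forces $F\ne F'$ only... in fact adjacency is $\dim(F\cap F')<k$, so $F'\ne F$, giving $d(F) = {n\brack k}-1 = {n\brack k} - {n-k\brack 0} - 1$, matching with equality) separately, and the main range $t<k$ via the estimate above. Since the lemma only claims ``$\le$'', the slack from $n \geq 2t(k-t+1)+k+1$ makes the computation robust.
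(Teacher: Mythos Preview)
Your overall strategy is exactly the paper's: apply Proposition~\ref{upperbound}, identify the second argument of the max as the desired bound via \eqref{independent}, and reduce to showing $\Delta(K_q(n,k,t)) + \alpha(K_q(n,k,t)) < {n\brack k}$ by proving the single inequality $q^{(k-t)^2}{n-k\brack k-t}{k\brack t} > {n-t\brack k-t}$ and then absorbing $\alpha$ as the ``$i=t$'' term of the count. So the architecture is right.

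There is, however, a genuine gap in the execution. You compute the lower exponent as $(k-t)^2 + (k-t)(n-2k+t) + t(k-t) = (k-t)(n-k)$, but the arithmetic actually gives
\[
(k-t)\bigl[(k-t)+(n-2k+t)+t\bigr] = (k-t)(n-k+t),
\]
not $(k-t)(n-k)$. With the correct exponent, comparison against the upper bound ${n-t\brack k-t} < q^{(k-t)(n-k+1)}$ reduces to $n-k+t \ge n-k+1$, i.e.\ $t\ge 1$, which is given. So only the stated hypotheses $n\ge 2k$ and $t\ge 1$ are needed. Your miscomputation led you to invoke ``the hypothesis $n \geq 2t(k-t+1)+k+1$'', but that is \emph{not} a hypothesis of this lemma---it belongs to Lemma~\ref{lowerbound} and Theorem~\ref{GKneser}. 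You cannot use it here, and with the corrected exponent you don't need it. (Incidentally, with your erroneous exponent no hypothesis on $n$ rescues the crude comparison $q^{(k-t)(n-k)}$ versus $q^{(k-t)(n-k+1)}$ anyway.)

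A minor point: your $t=k$ aside contains a slip, since ${n\brack k}-{n-k\brack 0}-1 = {n\brack k}-2$, not ${n\brack k}-1$; in fact $K_q(n,k,k)$ is the complete graph and has treewidth ${n\brack k}-1$, so the lemma as literally stated with ``$t\le k$'' fails at $t=k$. The intended (and paper-wide) range is $t<k$, and you should simply restrict to that.
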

\begin{proof}
According to Proposition~\ref{upperbound}, to prove an upper bound of $\tw(K_q(n,k,t))$ we only need to compare the size of $\Delta(K_q(n,k,t))$ and $|V(K_q(n,k,t))|-\alpha(K_q(n,k,t))-1.$

Fix $A\in {V\brack k}$, by Proposition \ref{lem4}, we have $$\left|\left\{B\in{V\brack k}\mid \dim(A\cap B)=i\right\}\right|=q^{(k-i)^2}{n-k\brack k-i}{k\brack i}.$$ By the definition of $K_q(n,k,t)$, we have
\begin{align*}
\Delta(K_q(n,k,t))&=\sum\limits_{i=0}^{t-1}\left|\left\{B\in{V\brack k}\mid \dim(A\cap B)=i\right\}\right|\\
&=\sum\limits_{i=0}^{t-1}q^{(k-i)^2}{n-k\brack k-i}{k\brack i}.
\end{align*}

\noindent\textbf{Claim 1.}  For any $t\geq 1$, we have $q^{(k-t)^2}{n-k\brack k-t}{k\brack t}> {n-t\brack k-t}$.

\vskip.2cm
{\bf Proof of Claim 1.} On the one hand, by Lemma \ref{lem1-1-1} (iii), we have
$$q^{(k-t)^2}{n-k\brack k-t}{k\brack t}> q^{(k-t)^2}q^{(k-t)(n-2k+t)}q^{t(k-t)}=q^{(k-t)(n-k+t)}.$$
On the other hand, by Lemma \ref{lem1-1-1} (iii) again, we get
$${n-t\brack k-t}< q^{(k-t)(n-k+1)}.$$
Thus, we have the result as required. \q

It suffices to prove that $\Delta(K_q(n,k,t))+\alpha(K_q(n,k,t))< |V(K_q(n,k,t))|$. According to the Claim 1, we have
\begin{align*}
\Delta(K_q(n,k,t))+\alpha(K_q(n,k,t))&< \sum\limits_{i=0}^{t-1}q^{(k-i)^2}{n-k\brack k-i}{k\brack i}+q^{(k-t)^2}{n-k\brack k-t}{k\brack t}\\
&=\sum\limits_{i=0}^{t}q^{(k-i)^2}{n-k\brack k-i}{k\brack i}\\
&=\sum\limits_{i=0}^{t}\left|\left\{B\in{V\brack k}\mid \dim(A\cap B)=i\right\}\right|.
\end{align*}
Since $\bigcup\limits_{i=0}^{t}\left\{B\in{V\brack k}\mid \dim(A\cap B)=i\right\}\subseteq {V\brack k},$  we have $$\Delta(K_q(n,k,t))+\alpha(K_q(n,k,t))< |V(K_q(n,k,t))|,$$
and $\tw(K_q(n,k,t))\leq |V(K_q(n,k,t))|-\alpha(K_q(n,k,t))-1={n\brack k}-{n-t\brack k-t}-1.$
\end{proof}
\subsection{Lower bound for treewidth in Theorem~\ref{GKneser}}\label{LB1}
In this subsection, we give the lower bound  of $\tw(K_q(n,k,t))$.

Let $X\subseteq V(G)$ and $G[X]$ the subgraph of $G$ induced by $X$. Denots $G-X=G[V(G)\setminus X]$.  Let $p$ be a fixed constant with $\frac{2}{3}\leq p < 1$. The \emph{$p$-separator} of  $G$ is  a subset $X\subset V(G)$ such that there is no component in $G-X$ that contains more than $p|V(G-X)|$ vertices. The following result describes the relationship  between the treewidth and the $p$-separators of  $G$.

\begin{prop}{\rm (\cite{Robertson})}\label{separator}
Every graph $G$ has a $p$-separator of order at most $\tw(G)+1$ for each $\frac{2}{3}\leq p < 1$.
\end{prop}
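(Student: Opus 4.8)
The plan is to work with an \emph{optimal} tree decomposition $(T,(V_t)_{t\in V(T)})$ of $G$, so that every bag satisfies $|V_t|\le \tw(G)+1$, and to locate a single ``central'' node whose bag (possibly after a local rebalancing) is the desired $p$-separator. The structural fact I would use throughout is that a bag separates the branches hanging off its node: for $t\in V(T)$ and a component $C$ of $T-t$, write $W_C$ for the set of vertices of $G$ lying in some bag of $C$ but not in $V_t$. The connectivity axiom (iii) forces each vertex of $V(G)\setminus V_t$ into exactly one $W_C$, and axiom (ii) forces every edge of $G-V_t$ to stay within a single branch; hence every component of $G-V_t$ is contained in one $W_C$, and $\sum_C|W_C|=|V(G)|-|V_t|$.

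The core step is to produce a node $t^\ast$ for which every branch carries at most half of the vertices, i.e.\ $|W_C|\le \tfrac12|V(G)|$ for all $C$. I would orient each edge $e=\{s,t\}$ of $T$ as follows: deleting $e$ splits $T$ into the parts $T_s\ni s$ and $T_t\ni t$, and I set $Z_{s\to t}$ to be the vertices appearing in some bag of $T_t$ but not in $V_s$. One checks $Z_{s\to t}$ and $Z_{t\to s}$ are disjoint subsets of $V(G)$, so at most one of them can exceed $\tfrac12|V(G)|$; orient $e$ toward that heavier endpoint when one exists. Since $T$ is a tree, repeatedly following out-edges cannot close up into a directed cycle, so some node $t^\ast$ has no out-edge. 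For every neighbour $s$ this means $|Z_{t^\ast\to s}|\le \tfrac12|V(G)|$, and since $Z_{t^\ast\to s}$ is exactly the branch $W_C$ for the component $C$ of $T-t^\ast$ containing $s$, every branch — hence every component of $G-V_{t^\ast}$ — has at most $\tfrac12|V(G)|$ vertices. As $|V_{t^\ast}|\le \tw(G)+1$, this already delivers a separator of the required order, but in the ``$\tfrac12|V(G)|$'' normalization rather than the one in the statement.

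It then remains to pass from ``at most $\tfrac12|V(G)|$'' to ``at most $p\,|V(G{-}X)|=p(|V(G)|-|X|)$'' for $p\ge \tfrac23$. If $|V_{t^\ast}|\le(1-\tfrac{1}{2p})|V(G)|$, then $\tfrac12|V(G)|\le p(|V(G)|-|V_{t^\ast}|)$, and $X=V_{t^\ast}$ works verbatim. Otherwise the bag is a large fraction of $V(G)$ (which forces $\tw(G)$ to be a constant fraction of $|V(G)|$), and a single branch $W_{C_1}$ may contain almost all of the remaining $|V(G)|-|V_{t^\ast}|$ vertices; a small example shows that in this regime \emph{no} bag can serve, so one must rebalance. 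The idea is to push the separator into the dominant branch: delete from $X$ some vertices of $V_{t^\ast}$ not adjacent to $W_{C_1}$ (these drop into small components) while inserting an equal number of vertices of $W_{C_1}$ (splitting it), keeping $|X|\le \tw(G)+1$. Made systematic, this is a recursion on the tree decomposition restricted to the heavy branch with its interface $V_{t^\ast}\cap(\bigcup_{C_1}V_r)$ held fixed in the separator. Carrying out this augmentation while controlling the budget is the main obstacle; the median step and the branch-separation lemma are routine by comparison, and it is the slack afforded by the threshold $p\ge\tfrac23$ that makes the rebalancing go through.
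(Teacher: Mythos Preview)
The paper does not prove this proposition at all; it is quoted from Robertson--Seymour and used as a black box. So there is no ``paper's proof'' to compare against, and I can only assess your argument on its own merits.

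Your centroid step is correct and standard: orienting each tree edge toward the heavier side and taking a sink $t^\ast$ gives a bag $V_{t^\ast}$ such that every component of $G-V_{t^\ast}$ has at most $\tfrac12|V(G)|$ vertices. You also correctly see that this is the wrong normalisation: the paper's definition asks for at most $p\,|V(G{-}X)|$, not $p\,|V(G)|$, and these differ exactly when the bag is a large fraction of $V(G)$. Your proposed ``rebalancing'' in the last paragraph is not an argument, and you say so yourself (``the main obstacle''). As written, the proof is incomplete precisely at the point that distinguishes the stated proposition from the weaker $\tfrac12|V(G)|$ version. A concrete witness that the bag itself need not work: for two triangles sharing a vertex, the optimal two-bag path decomposition has bags $\{1,2,3\}$ and $\{3,4,5\}$; neither bag is a $\tfrac23$-separator in the paper's sense, yet $\tw=2$.

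There is a clean way to close the gap that avoids any ad hoc surgery. First refine the optimal tree decomposition to a \emph{smooth} one in which adjacent bags differ by at most one vertex (subdivide tree edges and interpolate bags; the width does not increase). Your own computation then sharpens: with $n':=|V(G)\setminus V_{t^\ast}|$ and $s$ the neighbour of $t^\ast$ in a given branch,
\[
|W_{C_s}|\le \tfrac12\bigl(|V(G)|-|V_s\cap V_{t^\ast}|\bigr)\le \tfrac12\,(n'+1),
\]
since smoothness forces $|V_s\cap V_{t^\ast}|\ge |V_{t^\ast}|-1$. Hence $|W_{C_s}|\le \tfrac23 n'$ whenever $n'\ge 3$, and $X=V_{t^\ast}$ is a $\tfrac23$-separator. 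The residual cases $n'\le 2$ mean $|V(G)|\le \tw(G)+3$, and there one finishes by hand: take $X=V(G)$ if $|V(G)|\le \tw(G)+1$; if $|V(G)|=\tw(G)+2$ choose two non-adjacent vertices (they exist since $G\neq K_{|V(G)|}$) and let $X$ be the rest; if $|V(G)|=\tw(G)+3$ choose three vertices not inducing a triangle and let $X$ be the rest. This replaces your unspecified ``recursion into the heavy branch'' with a one-line refinement of the tree decomposition plus a finite case check.
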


Let $X$ be a $p$-separator of $G$. Then  the vertices of $G-X$ can be divided into two parts, say $\mathcal {A}$ and $\mathcal {B}$, such that the components in $\mathcal {A}$ and $\mathcal {B}$ contain at most $p|V(G-X)|$ vertices. This leads to the following result.
\begin{lem}\label{separator2}
Let $X$ be a $p$-separator. Then the vertices of $G-X$ can be divided into two parts $\mathcal {A}$ and $\mathcal {B}$ such that there is no edge between $\mathcal {A}$ and $\mathcal {B}$, and the equation
\begin{align}
\frac{1}{3}|V(G-X)|\leq &|\mathcal {A}|,|\mathcal {B}|\leq \frac{2}{3}|V(G-X)| \label{e2'}
\end{align}
holds.
\end{lem}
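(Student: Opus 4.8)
The plan is a short, elementary case analysis on the component sizes of $G-X$. Here $X$ is a $p$-separator; the stated bound is the $p=\tfrac23$ case, and I would phrase the lemma (and use it later) with $p=\tfrac23$, so that every connected component of $G-X$ has at most $\tfrac23\,|V(G-X)|$ vertices. Write $N=|V(G-X)|$ and let $C_1,\dots,C_m$ be the vertex sets of the components of $G-X$, so $\sum_{i=1}^m|C_i|=N$ and $|C_i|\le\tfrac23 N$ for each $i$. The first observation is that for \emph{any} grouping of $\{C_1,\dots,C_m\}$ into two classes, the resulting bipartition of $V(G-X)$ has no edge across it, since $G-X$ has no edge joining distinct components; so the whole problem reduces to choosing the grouping so that both sides have size in $[\tfrac13 N,\tfrac23 N]$.

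Next I would split into two cases. If some component is already large, say $|C_i|\ge\tfrac13 N$, I would simply put $\mathcal A=C_i$ and $\mathcal B=\bigcup_{j\ne i}C_j$: then $\tfrac13 N\le|\mathcal A|=|C_i|\le\tfrac23 N$, hence $|\mathcal B|=N-|C_i|$ also lies in $[\tfrac13 N,\tfrac23 N]$ (and is positive, so $\mathcal B\neq\emptyset$). This single recipe covers, in particular, the degenerate situations of two or three components each of size roughly $\tfrac13 N$. Otherwise every component has $|C_i|<\tfrac13 N$, and I would use the obvious greedy accumulation: order the components arbitrarily, let $r$ be the least index with $|C_1|+\cdots+|C_r|\ge\tfrac13 N$ (it exists, since the total is $N$), and set $\mathcal A=C_1\cup\cdots\cup C_r$ and $\mathcal B=C_{r+1}\cup\cdots\cup C_m$. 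Minimality of $r$ forces $|C_1|+\cdots+|C_{r-1}|<\tfrac13 N$, and since $|C_r|<\tfrac13 N$ this yields $|\mathcal A|<\tfrac23 N$; combined with $|\mathcal A|\ge\tfrac13 N$ it follows that $|\mathcal B|=N-|\mathcal A|\in(\tfrac13 N,\tfrac23 N]$, and in particular $r<m$, so $\mathcal B\neq\emptyset$. Either way $(\mathcal A,\mathcal B)$ is the desired partition.

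I do not expect any real obstacle here: the content is just the elementary fact that a multiset of positive reals summing to $N$ with every element at most $\tfrac23 N$ can be broken into two blocks each of total weight in $[\tfrac13 N,\tfrac23 N]$, and the only point requiring a little care is the possible presence of one large component, which is exactly why the argument splits into the two cases above. The sole place where the hypothesis on $X$ enters is the bound $|C_i|\le\tfrac23 N$, which is precisely what a $\tfrac23$-separator provides.
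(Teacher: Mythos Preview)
Your argument is correct and is exactly the standard way to fill in this lemma. The paper itself does not really give a proof: it simply notes before the lemma that, since every component of $G-X$ has at most $p|V(G-X)|$ vertices, one can group the components into two parts $\mathcal{A}$ and $\mathcal{B}$, and then states the lemma. Your case split (one component already of size $\ge\tfrac13 N$, versus all components of size $<\tfrac13 N$ handled greedily) supplies precisely the omitted details.
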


With the help of these important results we give the following lemma.

\begin{lem}\label{lowerbound}
Let $k>t\geq 1$ and $n\geq 2t(k-t+1)+k+1$. Let $\Gamma:=K_q(n, k,t)$. Then $$\tw(\Gamma)\geq {n\brack k}-{n-t\brack k-t}-1.$$
\end{lem}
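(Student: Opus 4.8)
The plan is to establish the matching lower bound by contradiction, combining Proposition~\ref{separator} with a counting argument driven by the Erd\H{o}s--Ko--Rado structure. Suppose $\tw(\Gamma)\le {n\brack k}-{n-t\brack k-t}-2$. Then by Proposition~\ref{separator} (with, say, $p=2/3$) there is a separator $X\subseteq V(\Gamma)$ with $|X|\le {n\brack k}-{n-t\brack k-t}-1$, and by Lemma~\ref{separator2} the remaining vertices split into nonempty parts $\mathcal A,\mathcal B$ with no edges between them and each of size between $\frac13|V(\Gamma-X)|$ and $\frac23|V(\Gamma-X)|$. Write $N={n\brack k}=|V(\Gamma)|$ and $M={n-t\brack k-t}$. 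Since $|X|\le N-M-1$, we have $|\mathcal A|+|\mathcal B|=|V(\Gamma-X)|\ge M+1$, so \emph{both} $\mathcal A$ and $\mathcal B$ are nonempty and in fact $\min(|\mathcal A|,|\mathcal B|)\ge \frac13(M+1)$.

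The key structural point is that ``no edge between $\mathcal A$ and $\mathcal B$'' means every vertex of $\mathcal A$ is \emph{non}-adjacent to every vertex of $\mathcal B$ in $\Gamma$; i.e.\ for all $A\in\mathcal A$, $B\in\mathcal B$ we have $\dim(A\cap B)\ge t$. So $\mathcal A$ and $\mathcal B$ are ``cross-$t$-intersecting'' families of $k$-subspaces. The natural tool here is a cross-intersecting version of Theorem~\ref{EKR-vectorspace}: if $\mathcal A,\mathcal B\subseteq{V\brack k}$ are nonempty and cross-$t$-intersecting with $n$ sufficiently large relative to $k$ and $t$ (which the hypothesis $n\ge 2t(k-t+1)+k+1$ is designed to guarantee), then $|\mathcal A|+|\mathcal B|\le 2{n-t\brack k-t}$, with a corresponding (near-)equality structure. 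Actually what I expect to need is slightly sharper: I want to show that if both families are ``large'' (each at least roughly $\frac13 M$, which is still much bigger than the bound one gets for genuinely asymmetric cross-intersecting pairs when $n$ is large), then $|\mathcal A|+|\mathcal B|$ cannot reach $M+1$. Concretely, for large $n$ a cross-$t$-intersecting pair with $\min(|\mathcal A|,|\mathcal B|)$ not tiny forces $\mathcal A$ and $\mathcal B$ to share a common fixed $t$-subspace $W$ (both contained in the star of $W$), whence $|\mathcal A|+|\mathcal B|\le 2M$ is not immediately a contradiction, but combined with the fact that a vertex in the star of $W$ that lies in neither $\mathcal A$ nor $\mathcal B$ must be in $X$, one pushes through to $|X|\ge N-M$, contradicting $|X|\le N-M-1$. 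The cleanest route: show $\mathcal A\cup\mathcal B$ together with $X$ must cover all of ${V\brack k}$ except at most $M-1$ vertices, yet $|\mathcal A\cup\mathcal B|\le M$, giving $|X|\ge N-M$.

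So the detailed steps are: (1) set up the contradiction hypothesis and extract $X,\mathcal A,\mathcal B$ as above, recording the size bounds $\min(|\mathcal A|,|\mathcal B|)\ge\frac13(M+1)$ and $|\mathcal A\cup\mathcal B|=|V(\Gamma-X)|\ge M+1$ — wait, this already contradicts any bound of the form $|\mathcal A\cup\mathcal B|\le M$, so the real content is exactly proving such a bound; (2) prove the cross-$t$-intersecting bound: nonempty cross-$t$-intersecting $\mathcal A,\mathcal B\subseteq{V\brack k}$ with both of size $\ge\frac13 M$ (using $n\ge 2t(k-t+1)+k+1$) satisfy $|\mathcal A\cup\mathcal B|\le {n-t\brack k-t}=M$; (3) combine to get $|V(\Gamma-X)|\le M$, hence $|X|\ge N-M$, contradicting $|X|\le N-M-1$. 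Step~(2) is the main obstacle. I would attack it by a shifting/compression argument in the Grassmann scheme, or by invoking a known cross-EKR theorem for vector spaces and then controlling the error terms using Lemma~\ref{lem1-1-1} and Proposition~\ref{lem4} to estimate how many $k$-subspaces lie in the ``star'' of a $t$-subspace versus outside it; the precise range $n\ge 2t(k-t+1)+k+1$ should be exactly what makes the ``both families large $\Rightarrow$ common fixed $t$-subspace'' dichotomy work, so verifying that threshold is where the delicate counting lives. Once the common $t$-subspace $W$ is pinned down, $|\mathcal A\cup\mathcal B|\le |\{F\in{V\brack k}: W\subseteq F\}|={n-t\brack k-t}=M$ is immediate, closing the argument.
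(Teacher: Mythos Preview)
Your setup (the separator $X$, the partition into cross-$t$-intersecting parts $\mathcal A,\mathcal B$, and the size bounds $|\mathcal A|,|\mathcal B|\ge\frac13(M+1)$) matches the paper exactly. The gap is in your Step~(2): you never prove the claim that two cross-$t$-intersecting families, each of size at least $\frac13{n-t\brack k-t}$, must lie in a common $t$-star. You only say you ``would attack it by shifting/compression\ldots\ or by invoking a known cross-EKR theorem,'' and you speculate that the threshold $n\ge 2t(k-t+1)+k+1$ is ``exactly what makes the dichotomy work.'' That is the entire content of the lemma, and it is left as a black box. A structural cross-$t$-intersecting stability result of this strength for vector spaces is not something you can simply cite here, and nothing in the paper's toolkit (Lemma~\ref{lem1-1-1}, Proposition~\ref{lem4}) gives it to you directly.

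The paper avoids this altogether with a much more elementary and local argument. Since $|V(\Gamma-X)|>\alpha(\Gamma)$, there is an edge $u_1u_2$ somewhere in $\Gamma-X$, say in $G[\mathcal A]$. Every $v\in\mathcal B$ is non-adjacent to both $u_1$ and $u_2$, so $v$ contains some $\tau_1\in{u_1\brack t}$ and some $\tau_2\in{u_2\brack t}$; since $\dim(u_1\cap u_2)<t$ we have $\dim(\tau_1+\tau_2)\ge t+1$. Pigeonholing over the ${k\brack t}^2$ pairs $(\tau_1,\tau_2)$ gives a pair with at least ${k\brack t}^{-2}|\mathcal B|>{k\brack t}^{-2}\cdot\frac13{n-t\brack k-t}$ vertices of $\mathcal B$ containing $\tau_1+\tau_2$, while any such set has size at most ${n-t-1\brack k-t-1}$. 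The hypothesis $n\ge 2t(k-t+1)+k+1$ is precisely what makes ${n-t-1\brack k-t-1}\le{k\brack t}^{-2}\cdot\frac13{n-t\brack k-t}$ (this is the paper's Claim~2, proved by the elementary estimates in Lemma~\ref{lem1-1-1}), yielding the contradiction. Note in particular that the paper never shows $|\mathcal A\cup\mathcal B|\le M$ or that a common $t$-subspace exists; the threshold on $n$ comes from this specific pigeonhole inequality, not from a cross-EKR stability theorem.
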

\begin{proof}
We suppose to the contrary that $\tw(\Gamma)< {n\brack k}-{n-t\brack k-t}-1.$ By Proposition \ref{separator}, there is a $\frac{2}{3}$-separator $X$ such that $|X|<{n\brack k}-{n-t\brack k-t}.$ Therefore, $|V(\Gamma-X)|>{n-t\brack k-t}=\alpha(\Gamma)$, and we have $V(\Gamma-X)$ is too large to be an independent set which implies that there exists an edge $u_1u_2$ in $\Gamma-X$. By Lemma~\ref{separator2}, $V(\Gamma-X)$ can be divided into two parts $\mathcal {A}$ and $\mathcal {B}$ such that there is no edge between $\mathcal {A}$ and $\mathcal {B}$, and the equation (\ref{e2'}) holds. Thus, $|\mathcal {A}|,|\mathcal {B}|\geq 2$. Without loss of generality, assume that  $u_1u_2$ is in $G[\mathcal {A}]$, where $G=\Gamma-X$.

Let $S={u_1 \brack t}\times {u_2 \brack t}$. Thus, $|S|={k\brack t}^2$. For any $v\in \mathcal {B}$ and $i\in\{1,2\}$, since $vu_i\notin E(\Gamma)$, we have $\dim(v\cap u_i)\geq t$. As $\dim(u_1\cap u_2)<t$, we get $\tau_1\neq \tau_2$ for any $(\tau_1,\tau_2)\in S$, which implies that $\dim(\tau_1\cap\tau_2)\leq t-1$. Let $\mathcal {B}(\tau_1,\tau_2)=\{v\in \mathcal {B} \mid \tau_1,\tau_2\in v\}$. According to Pigeonhole Principle, there exists some $(\tau_1,\tau_2)\in S$ such that
$$|\mathcal {B}(\tau_1,\tau_2)|\geq \frac{1}{|S|}|\mathcal {B}|>{k\brack t}^{-2}\cdot\frac{1}{3} {n-t\brack k-t},$$
by (\ref{e2'}). On the other hand, since $\dim(\tau_1+\tau_2)$ is minimized when $\dim(\tau_1\cap\tau_2)$ is maximized and $$\dim(\tau_1+\tau_2)=\dim(\tau_1)+\dim(\tau_2)-\dim(\tau_1\cap\tau_2)\geq t+1,$$
we have $|\mathcal {B}(\tau_1,\tau_2)|\leq {n-t-1\brack k-t-1}$. Combining the lower  and upper bounds of $|\mathcal {B}(\tau_1,\tau_2)|$, we have
\begin{equation}\label{e1}
{n-t-1\brack k-t-1}> {k\brack t}^{-2}\cdot\frac{1}{3} {n-t\brack k-t}.
\end{equation}

\noindent\textbf{Claim 2.}  If $k>t\geq 1$ and $n\geq 2t(k-t+1)+k+1$, then ${n-t-1\brack k-t-1}\leq {k\brack t}^{-2}\cdot\frac{1}{3} {n-t\brack k-t}.$

\vskip.2cm
{\bf Proof of Claim 2.} It suffices to prove that $\frac{q^{n-t}-1}{q^{k-t}-1}\geq 3{k\brack t}^2$ by Lemma \ref{lem1-1-1} (i).

According to Lemma \ref{lem1-1-1} (ii), $\frac{q^{n-t}-1}{q^{k-t}-1}> q^{n-k}$.  By Lemma \ref{lem1-1-1} (iii), we have $3{k\brack t}^2<3\cdot q^{2t(k-t+1)}$. If $q\geq 3$, we have the result as required since $n\geq 2t(k-t+1)+k+1$. If $q=2$, then
\begin{align*}
3{k\brack t}^2&=3\cdot \prod\limits^{t-1}_{i=0}\frac{2^{k-i}-1}{2^{t-i}-1}\cdot\prod\limits^{t-1}_{i=0}\frac{2^{k-i}-1}{2^{t-i}-1}\\
&=\prod\limits^{t-1}_{i=0}\frac{2^{k-i}-1}{2^{t-i}-1}\cdot\prod\limits^{t-3}_{i=0}\frac{2^{k-i}-1}{2^{t-i}-1}2^{k-t+1}2^{k-t+2}\\
&\leq 2^{t(k-t+1)}2^{(t-2)(k-t+1)}2^{k-t+1}2^{k-t+2}\\
&=2^{2t(k-t+1)+1}.
\end{align*}
Therefore, we also have the result as required. \q

By Claim 2, we have a contradiction with the equation (\ref{e1}) .
\end{proof}

\noindent\textit{Proof of Theorem~\ref{GKneser}.}\quad
By Lemmas \ref{upbound} and \ref{lowerbound}, we obtain the result directly. \qed

\section{Treewidth of the complement of Grassmann graphs}\label{thm2}
In this section, we study the treewidth of the complement of Grassmann graphs, and give the exact value of the treewidth of $\overline{G_q(n,k)}$ for $n\geq k+2$. Note that $\overline{G_q(n,k)}$ is an empty graph when $n<k+2$. Firstly, we can easily get the upper bound of $\tw(\overline{G_q(n,k)})$ by Lemma 2.5 since $\overline{G_q(n,k)}=K_q(n,k,k-1)$.

\begin{lem}\label{up2}
Let $n$ and $k$ be positive integers with $k\geq2$ and $n\geq 2k$. Then
$$\tw(\overline{G_q(n,k)})\leq{n\brack k}-{n-k+1\brack 1}-1.$$
\end{lem}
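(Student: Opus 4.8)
The goal is to bound $\tw(\overline{G_q(n,k)}) = \tw(K_q(n,k,k-1))$ from above by ${n\brack k}-{n-k+1\brack 1}-1$ for $n \ge 2k$, $k \ge 2$. The plan is to invoke Proposition~\ref{upperbound}, exactly as in the proof of Lemma~\ref{upbound}, but now with $t = k-1$. By that proposition it suffices to check two things: first, that $\alpha(\overline{G_q(n,k)}) = {n-(k-1)\brack k-(k-1)} = {n-k+1\brack 1}$, which follows from the Erd\H{o}s--Ko--Rado theorem for vector spaces (Theorem~\ref{EKR-vectorspace}) via equation~\eqref{independent} since $n \ge 2k$; and second, that $\Delta(\overline{G_q(n,k)}) \le |V| - \alpha - 1$, i.e.\ that $\Delta(\overline{G_q(n,k)}) + {n-k+1\brack 1} < {n\brack k}$.

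For the degree computation I would reuse the formula from Lemma~\ref{upbound}: fixing $A \in {V\brack k}$, Proposition~\ref{lem4} gives
\[
\Delta(\overline{G_q(n,k)}) = \sum_{i=0}^{k-2} q^{(k-i)^2}{n-k\brack k-i}{k\brack i}
= {n\brack k} - {n-k\brack 0}{k\brack k} - q^{1}{n-k\brack 1}{k\brack k-1}
= {n\brack k} - 1 - q\,{n-k\brack 1}{k\brack 1},
\]
using $\sum_{i=0}^{k} q^{(k-i)^2}{n-k\brack k-i}{k\brack i} = {n\brack k}$ (these are all $B$ with $\dim(A\cap B)=i$, partitioning ${V\brack k}$) and peeling off the $i=k$ and $i=k-1$ terms. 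So the inequality to verify becomes
\[
{n\brack k} - 1 - q\,{n-k\brack 1}{k\brack 1} + {n-k+1\brack 1} < {n\brack k},
\]
i.e.\ ${n-k+1\brack 1} < 1 + q\,{n-k\brack 1}{k\brack 1}$. Since ${n-k+1\brack 1} = \frac{q^{n-k+1}-1}{q-1}$, ${n-k\brack 1} = \frac{q^{n-k}-1}{q-1}$ and ${k\brack 1} = \frac{q^k-1}{q-1} \ge q+1 \ge 3$, the right-hand side is at least $1 + q\cdot\frac{q^{n-k}-1}{q-1}\cdot(q+1) = 1 + q(q+1)\frac{q^{n-k}-1}{q-1}$, which comfortably exceeds $\frac{q^{n-k+1}-1}{q-1} = \frac{q\cdot q^{n-k}-1}{q-1}$; this is a one-line estimate using Lemma~\ref{lem1-1-1}(ii), and I would simply present that comparison directly.

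There is essentially no obstacle here: the result is a clean specialization of Lemma~\ref{upbound}'s argument, and indeed the statement already notes it ``can be easily obtained by Lemma 2.5.'' The only mild care needed is making the constant comparison $\Delta + \alpha < |V|$ explicit for $t=k-1$ (the generic Claim~1 in Lemma~\ref{upbound} is stated for the term $q^{(k-t)^2}{n-k\brack k-t}{k\brack t}$ with $t=k-1$, which gives $q\,{n-k\brack 1}{k\brack 1} > {n-k+1\brack 1}$ directly — exactly the needed inequality). So in the write-up I would either cite Lemma~\ref{upbound} with $t=k-1$ verbatim, or reproduce its two-line Claim~1 estimate in this special case; both close the proof immediately. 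The hard part of Theorem~\ref{CJohnson} — the lower bound, and in particular the exceptional behaviour at $(n,k)=(4,2)$ where the upper bound above is not tight — lies elsewhere and is not part of this lemma.
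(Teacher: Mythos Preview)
Your proposal is correct and is exactly the paper's approach: the paper simply notes that $\overline{G_q(n,k)}=K_q(n,k,k-1)$ and applies Lemma~\ref{upbound} with $t=k-1$, which is precisely what you do (and you correctly identify that Claim~1 there specializes to the needed inequality $q\,{n-k\brack 1}{k\brack 1} > {n-k+1\brack 1}$). One minor simplification: using Lemma~\ref{lem1-1-1}(i) you have ${n-k+1\brack 1} = 1 + q{n-k\brack 1}$, so the inequality reduces to ${k\brack 1} > 1$, immediate from $k\ge 2$.
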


\begin{lem}\label{lb2}
Let $n$ and $k$ be positive integers with $k\geq2$ and $n\geq \max\{k+3,2k\}$. Then
$$\tw(\overline{G_q(n,k)})\geq{n\brack k}-{n-k+1\brack 1}-1.$$
\end{lem}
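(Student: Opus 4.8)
The plan is to mimic the lower bound argument for Lemma~\ref{lowerbound}, now with $t=k-1$, but to handle the regime where the clean counting inequality of Claim~2 fails --- namely small $n$. Suppose $\tw(\overline{G_q(n,k)})<{n\brack k}-{n-k+1\brack 1}-1$. By Proposition~\ref{separator} there is a $\tfrac23$-separator $X$ with $|X|<{n\brack k}-{n-k+1\brack 1}$, so $|V(\Gamma-X)|>{n-k+1\brack 1}=\alpha(\Gamma)$ (the independence number here is ${n-(k-1)\brack k-(k-1)}={n-k+1\brack 1}$, valid for $n\ge 2k$), forcing an edge $u_1u_2$ inside one side $\mathcal A$ after the split of Lemma~\ref{separator2}. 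With $S={u_1\brack k-1}\times{u_2\brack k-1}$ we again get, for each $v\in\mathcal B$ and each $i$, that $\dim(v\cap u_i)\ge k-1$; since $\dim(u_1\cap u_2)<k-1$ the two chosen hyperplanes $\tau_1\ne\tau_2$ of $u_1,u_2$ satisfy $\dim(\tau_1+\tau_2)\ge k$, so $|\mathcal B(\tau_1,\tau_2)|\le{n-k\brack 1}$, while Pigeonhole over $|S|={k\brack k-1}^2={k\brack 1}^2$ gives some pair with $|\mathcal B(\tau_1,\tau_2)|>\tfrac13{k\brack 1}^{-2}{n-k+1\brack 1}$. This yields the contradiction whenever ${n-k\brack 1}\le\tfrac13{k\brack 1}^{-2}{n-k+1\brack 1}$, i.e. whenever $\dfrac{q^{n-k+1}-1}{q^{n-k}-1}\ge 3{k\brack 1}^2=3\Big(\dfrac{q^k-1}{q-1}\Big)^2$.

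First I would determine exactly for which $(n,k,q)$ this last inequality holds. Writing $L=\dfrac{q^{n-k+1}-1}{q^{n-k}-1}$, one has $q<L\le q+1$ with $L$ close to $q$ once $n-k$ is large; and $3{k\brack 1}^2$ grows like $3q^{2k-2}$. So the inequality can only fail for small $k$ --- essentially $k=2$ --- and then only for small $q$ or small $n-k$. I would carefully tabulate: for $k=2$ the requirement becomes $L\ge 3(q+1)^2$, which is false for every $q$, so a different argument is needed for $k=2$; for $k\ge 3$ the bound $L>q$ versus $3{k\brack 1}^2<3q^{2k-2}$ shows the inequality holds as soon as $q\ge3q^{2k-2}/q=3q^{2k-3}$ is comfortably beaten, i.e. it does hold for all $q$ when $k\ge 3$ except possibly the very smallest cases, which I would check by hand (these are finitely many and all work out because $n\ge 2k$ already makes $n-k\ge k\ge 3$, giving $L$ extremely close to $q$ while the exponent gap $2k-2$ is at least $4$). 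This disposes of every case with $k\ge3$, and also $k=2$ with $n\ge5$ (where one can instead refine the bound on $|\mathcal B(\tau_1,\tau_2)|$: the hyperplanes $\tau_1,\tau_2$ of the $2$-spaces $u_1,u_2$ are actually lines, $\dim(\tau_1+\tau_2)=2$, and a $k$-space through two fixed independent lines must contain their $2$-dimensional span, so $|\mathcal B(\tau_1,\tau_2)|\le{n-2\brack 0}=1$ when $k=2$, which makes the contradiction trivial once $|\mathcal B|$ exceeds ${k\brack 1}^2={2\brack 1}^2$, true once ${n-1\brack 1}$ is large enough, i.e. for all $n\ge5$).

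The one genuinely separate case is $k=2$, $n=4$, where $\alpha(\overline{G_q(4,2)})={3\brack 1}$ but $n=2k$ so the Erd\H os--Ko--Rado equality case (ii) also occurs and the graph is small and highly structured; here I expect the answer $q^4+O(q^3)$ rather than ${n\brack k}-{n-k+1\brack 1}-1={4\brack 2}-{3\brack 1}-1$. For this case I would argue directly: $\overline{G_q(4,2)}$ has ${4\brack 2}=(q^2+1)(q^2+q+1)$ vertices and is (strongly) regular, and I would produce an explicit tree decomposition of the claimed width by exploiting the spread structure (a partition of the $2$-spaces into $q^2+1$ pairwise-skew classes each of size $q^2+1$), using one bag per class together with a transversal, and separately prove a matching lower bound via Proposition~\ref{separator} by showing any small separator leaves a component too large. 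The main obstacle is precisely this $k=2,n=4$ analysis --- getting the constant in $q^4+O(q^3)$ right in both directions --- since the generic counting argument breaks down exactly here; everything else is a routine adaptation of Lemma~\ref{lowerbound} plus a finite check, and combined with Lemma~\ref{up2} (for $n\ge 2k$) and a short argument extending the upper bound to $k+2\le n<2k$ it gives Theorem~\ref{CJohnson}.
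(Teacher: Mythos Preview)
Your plan has a genuine gap: the pigeonhole transplant from Lemma~\ref{lowerbound} with $t=k-1$ is too lossy to cover the range $n\ge\max\{k+3,2k\}$. First, a computational slip: since $\tau_1,\tau_2$ are $(k-1)$-spaces with $\dim(\tau_1+\tau_2)\ge k$, any $k$-space containing both must \emph{equal} $\tau_1+\tau_2$, so $|\mathcal B(\tau_1,\tau_2)|\le{n-k\brack 0}=1$, not ${n-k\brack 1}$. With that corrected, the contradiction requires ${n-k+1\brack 1}\ge 3{k\brack 1}^2$. But this fails throughout the lemma's range: for $n=2k$ the left side is ${k+1\brack 1}\sim q^{k}$ while the right side is $\sim 3q^{2k-2}$, and for $k\ge 3$ the latter dominates for \emph{every} prime power $q$. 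Concretely, $(k,n,q)=(3,6,2)$ gives $15$ versus $147$; $(2,5,2)$ gives $15$ versus $27$. So the exceptional set is not finite and cannot be handled ``by hand''; your parenthetical that ``the exponent gap $2k-2$ is at least $4$'' actually cuts the wrong way, since a larger exponent on the right makes the inequality harder, not easier. Note also that $k=2,\ n=4$ is already excluded by the hypothesis $n\ge k+3$, so that discussion is irrelevant to this lemma.

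What the paper does instead is a structural, not a counting, argument. From the edge $v_1v_2\in\mathcal A$ and any single $u\in\mathcal B$ one gets $\dim(v_1\cap v_2)\ge\dim(u\cap v_1)+\dim(u\cap v_2)-\dim u=k-2$, forcing $\dim(v_1\cap v_2)=k-2$ and in fact $w:=v_1\cap v_2\subseteq u$ for every $u\in\mathcal B$. Then each $u\in\mathcal B$ is $(u\cap v_1)+(u\cap v_2)$ with $u\cap v_i$ a hyperplane of $v_i$ through $w$, so $|\mathcal B|\le(q+1)^2$ outright. A short case split (whether $G[\mathcal B]$ contains an edge or not) then refines this to $|V(G)|\le 2(q+1)^2-3$ in one case and $|\mathcal B|\le q+1$ in the other, both contradicting $|V(G)|>{n-k+1\brack 1}$ once $n\ge k+3$. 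The key idea you are missing is pinning down $\dim(v_1\cap v_2)$ exactly and deducing that $\mathcal B$ lives inside a very small configuration, rather than averaging over hyperplane pairs.
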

\begin{proof}
Suppose to the contrary that $\tw(\overline{G_q(n,k)})<{n\brack k}-{n-k+1\brack 1}-1.$ By Proposition \ref{separator}, there exists a $\frac{2}{3}$-separator $X$ such that $|X|<{n\brack k}-{n-k+1\brack 1}$. Therefore, $|V(\overline{G_q(n,k)}-X)|>{n-k+1\brack 1}$. Let $G=\overline{G_q(n,k)}-X$ for short. By Lemma~\ref{separator2}, it is easy to see that $V(G)$ can be partitioned into two parts $\mathcal {A}$ and $\mathcal {B}$ such that there is no edge between $\mathcal {A}$ and $\mathcal {B}$, and the equations
\begin{align}
\frac{1}{3}|V(G)|\leq &|\mathcal {A}|,|\mathcal {B}|\leq \frac{2}{3}|V(G)| \label{e2}
\end{align}
holds. Thus, $|\mathcal {A}|,|\mathcal {B}|\geq 2$. By Theorem~\ref{EKR-vectorspace} and $n\geq \max\{k+3,2k\}\geq 2k$, we have  $\alpha(\overline{G_q(n,k)})={n-k+1\brack 1}$. Since $|V(G)|>{n-k+1\brack 1}$,
 there is an edge in  $G[\mathcal {A}]$ or  $G[\mathcal {B}]$. Without loss of generality, assume that  $v_1v_2$ is in $G[\mathcal {A}]$ and let $v_1\cap v_2=w$. Thus we have $\dim w\leq k-2$.

We claim that $\dim w= k-2$. Since for any vertex $u\in \mathcal {B}$, there is no vertex in $\mathcal {A}$ is adjacent to $u$ in $G$, we have $\dim(u\cap v_1),\dim(u\cap v_2)\geq k-1$. This implies that $\dim(u\cap v_1),\dim(u\cap v_2)= k-1$ and then
\begin{align}\label{e3}
\dim(v_1\cap v_2)&\geq \dim((u\cap v_1)\cap(u\cap v_2))\\\notag
&\geq \dim(u\cap v_1)+\dim(u\cap v_2)-\dim u\\\notag
&=k-2.
\end{align}
On the other hand, $\dim w\leq k-2$ from above. Thus we have $\dim w=k-2$, as required.

Let $\alpha_1,\alpha_2,\hdots,\alpha_{k-2}$ be a basis of $w=v_1\cap v_2,$ then $w=\langle\alpha_1,\alpha_2,\hdots,\alpha_{k-2} \rangle$ and we let $$v_1=w+\langle  \beta_1, \beta_2 \rangle~\mbox{ and }~v_2=w+\langle  \beta_3, \beta_4 \rangle.$$
For any $u\in \mathcal {B},$ by equation (\ref{e3}), we have $\dim(u\cap v_1\cap v_2)\geq k-2$. On the other hand, since $\dim (v_1\cap v_2)= k-2$, we have $\dim(u\cap v_1\cap v_2)\leq k-2$. And then $\dim(u\cap v_1\cap v_2)= k-2$, which implies that $$|\mathcal {B}|\leq {2\brack 1}\times {2\brack 1}=(q+1)^2.$$

 We will complete  the proof by considering  the following two cases.

\vskip.2cm
\noindent\textbf{Case 1.} $G[\mathcal {B}]$ contains an edge.
\vskip.2cm

Let $u_1u_2$ be an edge in $G[\mathcal {B}]$. Since $u_iv_j\notin E(G)$ for $1\le i,j\le 2$, we can assume, without loss of generality, that
$$\begin{array}{rcl}
u_1=w+\langle \beta_1,\beta_3 \rangle,&u_2=w+\langle \beta_2,\beta_4 \rangle.
\end{array}$$
Thus we get $w+<\beta_1,\beta_2,\beta_3,\beta_4>=u_1+u_2=v_1+v_2.$ Following the similar analysis above, we have $|\mathcal {A}|\leq(q+1)^2$.
 Define
\begin{align*}
\mathcal{A}(u_1,u_2)&=\left\{u\in {u_1+u_2\brack k}\mid w\subseteq u,~\dim(u\cap u_1)=k-1,~\dim(u\cap u_2)=k-1\right\},\\
\mathcal{B}(v_1,v_2)&=\left\{v\in {v_1+v_2\brack k}\mid w\subseteq v,~\dim(v\cap v_1)=k-1,~\dim(v\cap v_2)=k-1\right\}.
\end{align*}
We can
 easily see that $\mathcal{A}\subseteq\mathcal{A}(u_1,u_2)$, $\mathcal{B}\subseteq\mathcal{B}(v_1,v_2)$ and $|\mathcal{A}(u_1,u_2)|\le (1+q)^2$, $|\mathcal{B}(u_1,u_2)|\le (1+q)^2$.
 Denote $z_1=w+\langle \beta_1,\beta_4 \rangle$ and $z_2=w+\langle \beta_2,\beta_3 \rangle$. Then $z_1,z_2\in \mathcal{A}(u_1,u_2)\cap \mathcal{B}(v_1,v_2)$.
Let $z_3=w+\langle \beta_1+\beta_2,\beta_3+\beta_4 \rangle$ and $z_4=w+\langle \beta_2,\beta_1+\beta_3 \rangle$. Then $z_3\in \mathcal{A}(u_1,u_2)\setminus \mathcal{B}(v_1,v_2)$ and $z_4\in \mathcal{B}(v_1,v_2)\setminus \mathcal{A}(u_1,u_2)$. Since $\dim(z_3\cap z_4)=k-2,$ we have $z_3$ is adjacent to $z_4$. Therefore, if $z_3\in \mathcal{A},$ then $z_4\notin \mathcal{B}$ which implies $z_4\notin \mathcal{A}(u_1,u_2)\cup \mathcal{B}(v_1,v_2)$. Thus we have
$$|V(G)|\leq |\mathcal {A}\cup\mathcal {B}|\le |\mathcal{A}(u_1,u_2)|+| \mathcal{B}(v_1,v_2)|-|\{z_1,z_2,z\}|\le 2(1+q)^2-3,$$ where $z\in \{z_3,z_4\}$.

On the other hand, by the assumption in the beginning, $|V(G)|\geq{n-k+1\brack 1}+1$. Combining with the upper  and  lower bounds of $|V(G)|$, we have
\begin{align*}
2(1+q)^2-3\geq {n-k+1\brack 1}+1=q^{n-k}+q^{n-k-1}+\cdots+q+2,
\end{align*}
a contradiction with $n\geq k+3.$

\vskip.2cm
\noindent\textbf{Case 2.} $\mathcal {B}$ is an independent set in $G$.
\vskip.2cm

Let $u_1,u_2\in \mathcal {B}$ and $\alpha=u_1\cap u_2$. Then $\dim\alpha=k-1$ and $\dim(u_1+ u_2)=k+1$. We define
\begin{align*}
\mathcal {C}&=\left\{u\in {V\brack k}\mid \alpha \in u\right\}\setminus {u_1+u_2\brack k},\\
\mathcal {D}&={u_1+u_2\brack k}.
\end{align*}
And let $\mathcal {D}_1=\{u\in \mathcal {D} \mid \alpha \in u\}$ and $\mathcal {D}_2=\mathcal {D}\setminus \mathcal {D}_1.$ It is easy to see that $\mathcal {C}$ and  $\mathcal {D}$ are independent sets in $G$, $|\mathcal {D}|={k+1\brack 1}$ and $|\mathcal {D}_1|={2\brack 1}=q+1$.

Firstly we prove that $\mathcal {A}\cup\mathcal {B}\subseteq\mathcal {C}\cup\mathcal {D}$. Clearly, $u_1,u_2\in \mathcal{C}\cup\mathcal {D}$. For any $s\in \mathcal {A}\cup\mathcal {B}$ and $s\notin \{u_1,u_2\}$, we prove that $s\in \mathcal {C}\cup\mathcal {D}.$ If $\alpha \subseteq s$, then we have $s\in \mathcal {C}\cup\mathcal {D}$. If $\alpha \nsubseteq s$, then $\dim (s\cap u_1\cap u_2)=\dim(s\cap \alpha)\leq k-2$, and we have
\begin{align*}
\dim((s\cap u_1)+(s\cap u_2))&=\dim(s\cap u_1)+\dim(s\cap u_2)-\dim(s\cap u_1\cap u_2)\\
&\geq 2(k-1)-(k-2)\\
&=k.
\end{align*}
Furthermore, since $(s\cap u_1)+(s\cap u_2)\subseteq s$ and $\dim s=k$,  we have $s=(s\cap u_1)+(s\cap u_2)\in {u_1+u_2\brack k}\subseteq \mathcal {C}\cup\mathcal {D}$.

Next, for any $y\in \mathcal {D}_2$, we have $\dim (y\cap \alpha)\leq k-2$. On the other hand, $\dim (y\cap \alpha)=\dim y +\dim \alpha-\dim(y+\alpha)\geq k+(k-1)-(k+1)=k-2$. So we have $\dim (y\cap \alpha)=k-2$.  Hence for  any $x\in \mathcal {C}$,  $\dim(x\cap y)\leq k-2$, which implies that $x$ is adjacent to $y$.
 Furthermore, since for any $x\in \mathcal {C}$ and $z\in \mathcal {D}_1$, we can easily see that $x$ is not adjacent to $z$. As there is an edge in $G[\mathcal {A}]$, we have $\mathcal {A}\cap \mathcal {C}\neq \emptyset$ and $\mathcal {A}\cap \mathcal{D}_2\neq \emptyset$. Therefore, if there is $x\in \mathcal {B}\cap \mathcal {C}$, then there is a vertex $y\in \mathcal {A}\cap \mathcal{D}_2$ such that $xy\in E(G)$; if there is a vertex $x\in \mathcal {B}\cap \mathcal{D}_2$, then there is a vertex $y\in \mathcal {A}\cap \mathcal {C}$ such that $xy\in E(G)$. Thus, we have $\mathcal {B}\subseteq \mathcal {D}_1$, since there is no edge between $\mathcal {A}$ and $\mathcal {B}$. Therefore, we get $|\mathcal {B}|\leq |\mathcal {D}_1|=q+1$. On the other hand,
$$|\mathcal {B}|\geq \frac{1}{3}|G|\geq\frac{1}{3} \left({n-k+1\brack 1}+1\right).$$

Combining with the lower  and  upper bounds of $|\mathcal {B}|,$ we have a contradiction with our assumption that $n\geq k+3$.
\end{proof}

\noindent\textit{Proof of Theorem~\ref{CJohnson}.}\quad We divide the proof of this theorem into the following two cases.

\vskip.2cm
\noindent\textbf{Case 1.} $k\geq 3.$
\vskip.2cm

In this case we have $2k\geq k+3$. If $n\geq 2k$, by Lemmas \ref{up2} and \ref{lb2}, we have $\tw(\overline{G_q(n,k)})={n\brack k}-{n-k+1\brack 1}-1.$ Note that $\overline{G_q(n,k)}\cong\overline{G_q(n,n-k)}$. If $n<2k$ then  $n>2(n-k)$.  By  Lemmas \ref{up2} and \ref{lb2},  $\tw(\overline{G_q(n,k)})=\tw(\overline{G_q(n,n-k)})={n\brack k}-{n-k+1\brack 1}-1.$

\vskip.2cm
\noindent\textbf{Case 2.} $k=2.$
\vskip.2cm

If $n\geq 5=k+3$, by Lemmas \ref{up2} and \ref{lb2}, we get $\tw(\overline{G_q(n,2)})={n\brack 2}-{n-1\brack 1}-1.$ If $n=4$, then $\tw(\overline{G_q(4,2)})\leq {4\brack 2}-{3\brack 1}-1=q^4+q^3+q^2-1$. There is a trivial lower bound of $\tw(\overline{G_q(4,2)})$, that is $\tw(\overline{G_q(4,2)})\geq \delta (\tw(\overline{G_q(4,2)}))=q^4$. Furthermore, following the similar proof of Lemma~\ref{lb2}, we have  $\tw(\overline{G_q(4,2)})\geq {4\brack 2}-{4\brack 1}-1=q^4+q^2-1$. Therefore, we have $\tw(\overline{G_q(4,2)})=q^4+O(q^3).$

Consequently, we complete the proof of this theorem. \qed

\section*{Acknowledgement}
This research was supported by   the National Natural Science Foundation of China (Grant 11771247 \& 11971158) and  Tsinghua University Initiative Scientific Research Program.

\addcontentsline{toc}{chapter}{Bibliography}

\end{document}